\newtheorem{theorem}{Theorem}[section]
\newtheorem{lemma}{Lemma}[section]
\newtheorem{corollary}{Corollary}[section]
\theoremstyle{definition}
\newtheorem{definition}{Definition}[section]
\theoremstyle{remark}
\newtheorem{remark}{Remarks}[section]
\newtheorem{example}{Example}[section]
\numberwithin{equation}{section}
 \title{A mathematical theory of truth and an application to the regress problem} 
\author {S. Heikkil\"a\\
Department of Mathematical Sciences, University of Oulu\\
BOX 3000, FIN-90014, Oulu, Finland\\
E-mail: sheikki@cc.oulu.fi}
\begin{document}
\maketitle 
Shorttitle: A mathematical theory of truth and an application

\noindent
\begin{abstract} 
\noindent
In this paper a class of languages  which are formal enough for mathematical reasoning is introduced. First-order formal languages containing natural numbers and numerals belong to that class. Its languages are called
mathematically  agreeable (shortly MA). Languages containing a given MA language $L$, and being sublanguages of $L$ augmented by a monadic predicate, are constructed. A mathematical theory of truth (shortly MTT) is formulated for some of these languages. MTT makes them MA languages which posses their own truth predicates. MTT is shown to conform well with the  eight norms presented for theories of truth in the paper `What Theories of Truth Should be Like (but Cannot be)', by Hannes Leitgeb. 
% MTT is  immune to Tarski's Undefinability Theorem, to `Tarski's Commandment' to `Tarskian hierarchies', and to `Liar paradox'.
MTT is also free from infinite regress, providing a proper framework to study the regress problem. Main tools used in proofs are Zermelo-Fraenkel (ZF) set theory and classical logic.  

\vskip12pt

\noindent {{\bf MSC:} 00A30, 03A05, 03B10, 03F50, 47H04, 47H10}
\vskip12pt

\noindent{\bf Keywords:} language, fully interpreted, truth, fixed point, regress problem.

\end{abstract}
\newpage

\baselineskip 15pt

%\rightline{\bf A mathematical theory of truth and an application}

\section {Introduction}\label{S1} 
In this paper 
a theory of truth is formulated for a class of languages. %They are formal enough for mathematical reasoning. 
The regress problem is studied within the framework of that theory.

\smallskip
%By  Chomsky's definition (cf. \cite{[C]}) a ``language is a set (finite or infinite) of sentences  of finite length, and constructed out of %finite sets of symbols". Allowing  countable set of symbols 
A language $L$ is called mathematically agreeable (shortly MA), if it satisfies the following three conditions.
\vskip2pt

(i) $L$ contains a countable syntax of the first-order predicate logic with equality (cf., e.g.,  \cite[Definitions II.5.1--5.2.6]{[Ku]}), natural numbers in variables and their names, numerals in terms.

%(ii) $L$ is closed with respect to logical connectives $\neg$, $\vee$, $\wedge$, $\rightarrow$, $\leftrightarrow$, and quantifiers
%$\forall$, $\exists$. 
 
(ii) $L$ is fully interpreted, i.e., every sentence of $L$ is interpreted either as true or as false.  

(iii) Classical truth tables (cf. e.g., \cite {[Ku]}, p.3) are valid for the logical connectives $\neg$, $\vee$, $\wedge$, $\rightarrow$ and
$\leftrightarrow$ of sentences of $L$, and classical rules of truth hold for applications of quantifiers
$\forall$ and $\exists$ to formulas of $L$. %The sentences of $L$ allow a G\"odel numbering.
%quantifications $\forall x A(x)$ and $\exists x A(x)$, where $A(x)$ is a formula of $L$.
\smallskip

These properties ensure that every MA language is formal enough for mathematical reasoning.

 Any countable first-order formal language, equipped with a consistent theory interpreted by a countable model, and containing natural numbers and numerals, is an MA language. A classical example is the language of arithmetic  with its standard model and interpretation. 

Basic ingredients of the present approach are: 

1. An MA language $L$ (base language). 

2. A monadic predicate $T$  having the set $X$ of numerals as its domain of discourse.

3. The language $\mathcal L$, which has  sentences of $L$, $T(\mathbf n)$, $\mathbf n\in X$, $\forall xT(x)$ and $\exists xT(x)$ as its basic sentences, and which is closed under  logical connectives $\neg$, $\vee$, $\wedge$, $\rightarrow$ and
$\leftrightarrow$.

4. The set $D$ of G\"odel numbers of sentences of $\mathcal L$ in its fixed G\"odel numbering. 

%\smallskip
The paper is organized as follows. 

In Section 2 we construct to each subset $U$ of $D$ new subsets  $G(U)$ and $F(U)$ of $D$. Let
$\mathcal L_U$ be the language of the sentences whose G\"odel numbers are in $G(U)\cup F(U)$.   It contains $L$.

In Section 3 results on the existence and construction of consistent fixed points of $G$, i.e. consistent sets satisfying $U=G(U)$, including the smallest one, are presented. $U$ is called consistent if for no sentence $A$ of $\mathcal L$ the G\"odel numbers of both $A$ and $\neg A$ are in $U$.

%\smallskip
In Section 4 a mathematical theory of truth (shortly MTT) is defined for  languages $\mathcal L_U$, where $U$ is a consistent  fixed point of $G$. A sentence $A$ of $\mathcal L_U$ is interpreted as true if its G\"odel number \#$A$ is in $G(U)$, and as false if \#$A$ is in $F(U)$. This makes $\mathcal L_U$ an MA language. 
$T$ is called a truth predicate for $\mathcal L_U$. Biconditionality: 
$A\leftrightarrow T(\left\lceil A\right\rceil)$, where $\left\lceil A\right\rceil$ is the numeral of the G\"odel number of $A$,  is shown to be true for all sentences $A$ of $\mathcal L_U$.
Since both $L$ and $\mathcal L_U$ are fully interpreted, their sentences are either true or false. 
Moreover, a sentence $A$ of $L$ is either true or false in the interpretation of $L$ if and only if $A$ 
%and $T(\left\lceil A\right\rceil)$ 
is either true or false in $\mathcal L_U$.

Section 5 is devoted to the study of the regress problem  within the framework of MTT. 
We present an example of an infinite regress (parade) of justifications
that satisfies the  conditions imposed on them in \cite{[17]}.  
Example is inconsistent with the following conclusion stated in \cite{[18]}: 
``it is logically impossible for there to be an infinite parade of justifications''. 
That conclusion is used in \cite{[17],[18]} as a basic argument to refute Principles of Sufficient Reasons. 

In Section 6 we shall first introduce some benefits of MTT compared with some other theories of truth. 
The lack of Liar-like sentences makes  MTT mathematically acceptable.
% MTT is  immune to `Tarski's Commandment' (cf.  \cite{[MG]}), to  Tarski's Undefinability Theorem  (cf. \cite{[20]}), to `Tarskian %hierarchies' (cf. \cite{[H]}), and to `Liar paradox' (cf. \cite{Ho}). 
 MTT is shown to conform well with  the eight norms presented in \cite{[16]} for theories of truth.   
 Connections of obtained results to  mathematical philosophy and epistemology are also presented. 
 
%Main tools used in proofs are Zermelo-Fraenkel (ZF) set theory and classical logic.  

%\smallskip
\section{Construction of languages}\label{S2} 

Let basic ingredients  $L$, $T$,  $\mathcal L$ and $D$ be 
as in the Introduction.
We shall  construct a family of sublanguages for the language $\mathcal L$. As for the used terminology, cf. e.g.,  \cite{[Ku]}.
Let $U$  be a subset of  $D$. 
Define subsets  $G(U)$ and $F(U)$ of $D$ by following rules, which are similar to those presented in \cite{[11]} ('iff' abbreviates 'if and only if'):  
 \begin{enumerate}
 \item[(r1)] If $A$ is a sentence of $L$, then the G\"odel number \#$A$ of $A$ is in  $G(U)$ iff  $A$ is true in the interpretation of $L$, and  in  $F(U)$ iff $A$ is false in the interpretation of $L$.  
 \item[(r2)]  Let $\mathbf n$ be a numeral. $T(\mathbf n)$ is in $G(U)$ iff $\mathbf n=\left\lceil A\right\rceil$, where $A$ is a sentence of $\mathcal L$ and  \#$A$  is in $U$. $T(\mathbf n)$ is in $F(U)$ iff $\mathbf n=\left\lceil A\right\rceil$, where $A$ is a sentence of $\mathcal L$ and \#[$\neg A$] is in $U$. 
% $A$ is a sentence of $\mathcal L$, then  \#$T(\left\lceil A\right\rceil)$ is in $G(U)$ iff \#$A$  is in $U$, and in $F(U)$ iff \#[$\neg A$] %is in $U$.  
\end{enumerate}  
Sentences determined by rules (r1) and (r2), i.e., all sentences $A$ of $L$ and those sentences $T(\left\lceil A\right\rceil)$ of $\mathcal L$ for which \#$A$ or \#[$\neg A$]  is in $U$, are called {\em basic sentences}. 

Next rules deal with logical connectives. Let $A$ and $B$ be sentences of $\mathcal L$. 
\begin{enumerate}
 \item[(r3)] Negation rule: \#[$\neg A$] is in $G(U)$ iff \#$A$ is in $F(U)$, and in $F(U)$ iff \#$A$ is in $G(U)$. 
 \item[(r4)] Disjunction rule: \#[$A\vee B$]
 is  in $G(U)$ iff  \#$A$ or \#$B$ is in $G(U)$, and in $F(U)$ iff \#$A$ and \#$B$
are in $F(U)$. 
 \item[(r5)] Conjunction rule: \#[$A\wedge B$]
is in $G(U)$ iff \#[$\neg A\vee \neg B$] is in $F(U)$ iff (by (r3) and (r4))  both \#$A$ and \#$B$ are in $G(U)$. Similarly, \#[$A\wedge B$] is in $F(U)$ iff \#[$\neg A\vee \neg B$] is in $G(U)$ iff  \#$A$ or \#$B$ is in $F(U)$.
\item[(r6)] Implication rule: \#[$A\rightarrow B$] is in $G(U)$ iff \#[$\neg A\vee B$] is in $G(U)$ iff (by (r3) and (r4))  \#$A$ is in  $F(U)$ or \#$B$ is in $G(U)$.  \#[$A\rightarrow B$] is  in $F(U)$ iff \#[$\neg A\vee B$] is in $F(U)$ iff \#$A$ is in $G(U)$ and \#$B$ is in $F(U)$.
\item[(r7)] Biconditionality rule:
\#[$A \leftrightarrow B$] is in $G(U)$ iff \#$A$ and \#$B$ are both in $G(U)$ or both in $F(U)$, and  in $F(U)$ iff  \#$A$ is in $G(U)$ and 
\#$B$ is in $F(U)$ or \#$A$ is in $F(U)$ and \#$B$ is in $G(U)$.
\end{enumerate}
Rule (r1) is applicable for sentences of $L$ formed by applications of universal and existential quantifiers to formulas of $L$.
Thus it suffices to set rules for $\exists xT(x)$ and $\forall xT(x)$. 
\begin{enumerate}
\item[(r8)] \#[$\exists xT(x)$] is in $G(U)$ iff \#$T(\mathbf n)$ is in $G(U)$ for some numeral $\mathbf n$. \#[$\exists xT(x)$] is in $F(U)$ iff  \#$T(\mathbf n)$ is in $F(U)$ for every numeral $\mathbf n$.
\item[(r9)] \#[$\forall xT(x)$] is in $G(U)$ iff \#$T(\mathbf n)$ is in $G(U)$ for every numeral $\mathbf n$, and \#[$\forall xT(x)$] is in $F(U)$ iff \#$T(\mathbf n)$ is in $F(U)$ at least for one numeral $\mathbf n$.
\end{enumerate}
Rules (r0)--(r9) and induction on the complexity of formulas
determine uniquely subsets  $G(U)$ and $F(U)$ of $D$ whenever $U$ is a subset of $D$. 
Denote by $\mathcal L_U$ the language formed by all those sentences $\mathcal L$ whose  G\"odel numbers are in $G(U)$ or in $F(U)$. $\mathcal L_U$  contains by rule (r1) all sentences of the base language $L$. 

\section{Fixed point results}\label{S3} 

We say that a subset $U$ of  $D$ is  {\em consistent} if for no sentence $A$ of $\mathcal L$ the G\"odel numbers of both $A$ and $\neg A$ are in $U$. For instance, the empty set $\emptyset$ is consistent. Let $\mathcal P$ denote the family of all consistent subsets of the set $D$ of
G\"odel numbers of sentences of $\mathcal L$.

%\smallskip
The following three lemmas can be proved  as the corresponding results in \cite{[11]}, replacing  'true in $M$' by 'true in the interpretation of $L$'.

\begin{lemma}\label{L201} (\cite[Lemma 2.1]{[11]})
 If $U\in\mathcal P$, then $G(U)\in\mathcal P$, $F(U)\in\mathcal P$, and $G(U)\cap F(U)=\emptyset$.
\end{lemma}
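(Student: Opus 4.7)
My first observation would be that all three conclusions reduce to the single statement $G(U)\cap F(U)=\emptyset$. Indeed, by the negation rule (r3), \#$[\neg A]\in G(U)$ iff \#$A\in F(U)$, so consistency of $G(U)$ (for no $A$, both \#$A$ and \#$[\neg A]$ in $G(U)$) is equivalent to saying that no $A$ has \#$A$ in both $G(U)$ and $F(U)$; likewise for $F(U)$. Hence the whole lemma collapses to proving the disjointness claim, which I will establish by induction on the complexity of sentences $A$ of $\mathcal L$ whose Gödel number lies in $G(U)\cup F(U)$.

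For the base cases, I distinguish the three kinds of basic sentences singled out by the construction. If $A$ is a sentence of $L$, then (r1) ties membership of \#$A$ in $G(U)$ or $F(U)$ to $A$ being true or false under the fixed interpretation of $L$; since $L$ is MA and thus fully interpreted under classical logic, $A$ is not simultaneously true and false. If $A$ is $T(\mathbf n)$ for a numeral $\mathbf n$, then (r2) forces \#$T(\mathbf n)\in G(U)\cap F(U)$ to yield sentences $B,B'$ of $\mathcal L$ with $\mathbf n=\left\lceil B\right\rceil=\left\lceil B'\right\rceil$, \#$B\in U$ and \#$[\neg B']\in U$; injectivity of the fixed Gödel numbering gives $B=B'$, so both \#$B$ and \#$[\neg B]$ are in $U$, contradicting $U\in\mathcal P$. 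Finally, for $A=\exists xT(x)$ (respectively $A=\forall xT(x)$), rule (r8) (respectively (r9)) combined with the previous case is what I need: if both \#$[\exists xT(x)]\in G(U)$ and \#$[\exists xT(x)]\in F(U)$, then some numeral $\mathbf n$ puts \#$T(\mathbf n)$ in $G(U)$ while all numerals, in particular that same $\mathbf n$, put \#$T(\mathbf n)$ in $F(U)$, contradicting the base case for $T(\mathbf n)$; the universal case is symmetric.

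For the inductive step, I treat each connective via its rule. If $A=\neg B$, then (r3) directly gives \#$A\in G(U)\cap F(U)\Rightarrow \#B\in F(U)\cap G(U)$, contradicting the induction hypothesis on $B$. If $A=B\vee C$, then by (r4) membership in both $G(U)$ and $F(U)$ forces \#$B$ or \#$C$ to lie in $G(U)$ while simultaneously both \#$B$ and \#$C$ lie in $F(U)$, again contradicting the IH on $B$ or $C$. The conjunction, implication, and biconditional cases are handled the same way through (r5)--(r7), each reducing via the stated equivalences to $\neg/\vee$ and then to a disjointness failure on a strictly simpler subformula.

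The only delicate point, and what I would flag as the main obstacle, is the $T(\mathbf n)$ base case: one must invoke the injectivity of the Gödel numbering together with the fact that the numeral $\mathbf n$ uniquely determines a candidate sentence $B$ of $\mathcal L$ (so that \#$B\in U$ and \#$[\neg B]\in U$ refer to the same $B$) in order to draw consistency of $U$ into contradiction. Everything else is purely bookkeeping following the classical truth tables hardwired into (r3)--(r9), and once the induction is completed, the three assertions of the lemma follow from the observation made at the outset.
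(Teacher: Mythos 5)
Your proof is correct and is essentially the argument the paper intends: the paper itself does not prove Lemma~\ref{L201} but defers to \cite[Lemma 2.1]{[11]}, and the expected proof is exactly your induction on sentence complexity, with all three claims reduced via rule (r3) to the disjointness $G(U)\cap F(U)=\emptyset$. You also correctly isolate the one nontrivial point, namely that in the $T(\mathbf n)$ base case the injectivity of the G\"odel numbering is what lets the two witnesses coincide so that the consistency of $U$ can be invoked.
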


According to Lemma \ref{L201} the mapping $G:=U\mapsto G(U)$ maps $\mathcal P$ into $\mathcal P$.
Assuming  that $\mathcal P$ is ordered by inclusion, we have the following result.

\begin{lemma}\label{L203} (\cite[Lemma 4.2]{[11]}) $G$ is order preserving in $\mathcal P$, i.e.,
 $G(U)\subseteq G(V)$ whenever  $U$ and $V$ are  sets of  $\mathcal P$ and $U\subseteq V$.
\end{lemma}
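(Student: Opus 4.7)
The plan is a structural induction on the complexity of sentences of $\mathcal L$, but strengthened to prove the two statements $G(U)\subseteq G(V)$ and $F(U)\subseteq F(V)$ simultaneously for any $U,V\in\mathcal P$ with $U\subseteq V$. The strengthening is necessary because rule (r3) forces membership of $\#[\neg A]$ in $G(\,\cdot\,)$ to depend on membership of $\#A$ in $F(\,\cdot\,)$, so monotonicity of $G$ alone cannot propagate through negations. (Note that the consistency assumption is not used in the argument itself; it is only needed to keep us inside $\mathcal P$, via Lemma \ref{L201}.)

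For the base cases, rule (r1) makes the membership of $\#A$ in $G(\,\cdot\,)$ or $F(\,\cdot\,)$ independent of $U$ when $A$ is a sentence of $L$, so both inclusions hold trivially. For $A=T(\mathbf n)$, rule (r2) ties $\#A\in G(U)$ to the existence of a sentence $B$ of $\mathcal L$ with $\mathbf n=\lceil B\rceil$ and $\#B\in U$, a condition that is preserved when $U$ is replaced by $V\supseteq U$; the $F$ case is symmetric with $\#[\neg B]$ in place of $\#B$. The two quantified basic sentences $\exists xT(x)$ and $\forall xT(x)$ reduce, via (r8) and (r9), to the $T(\mathbf n)$ case: for example, $\#[\exists xT(x)]\in G(U)$ produces some numeral $\mathbf n$ with $\#T(\mathbf n)\in G(U)$, and the basic-sentence case then delivers $\#T(\mathbf n)\in G(V)$, whence (r8) yields $\#[\exists xT(x)]\in G(V)$.

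In the inductive step, the rules translate directly. For (r3), $\#[\neg A]\in G(U)\Rightarrow \#A\in F(U)\Rightarrow \#A\in F(V)\Rightarrow \#[\neg A]\in G(V)$ by the inductive hypothesis on $A$, and the $F$ clause is symmetric. For (r4), the $G$ clause is a disjunction of inductive hypotheses on $\#A$ and $\#B$, the $F$ clause a conjunction of them. Rules (r5)–(r7) were stated in the excerpt as consequences of (r3) and (r4) applied to $\neg A\vee\neg B$ and $\neg A\vee B$, so the inductive step for them follows by invoking the cases already handled.

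The only real obstacle is the bookkeeping across nine rules; the mathematical content is simply the observation that every clause of (r1)–(r9) is \emph{monotone in $U$, separately for the $G$ and $F$ components}. Once the simultaneous inductive hypothesis is set up, each case is mechanical, so no delicate step is anticipated.
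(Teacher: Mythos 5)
Your proof is correct; the paper itself gives no proof of this lemma (it defers to \cite[Lemma 4.2]{[11]}), but the simultaneous induction on sentence complexity establishing $G(U)\subseteq G(V)$ and $F(U)\subseteq F(V)$ together is exactly the standard argument, and your observation that this strengthening is forced by the negation rule (r3) is the one essential point. Your remark that consistency of $U$ and $V$ plays no role in the monotonicity argument itself is also accurate.
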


\begin{lemma}\label{L204} (\cite[Lemma 4.3]{[11]})
If $\mathcal W$ is a chain in $\mathcal P$, then 
$\cup \mathcal W=\cup\{U\mid U\in \mathcal W\}$ is in $\mathcal P$. 
\end{lemma}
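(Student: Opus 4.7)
The plan is to argue by contradiction using the chain property. I would let $U=\cup\mathcal W$ and suppose, for contradiction, that $U$ is not consistent. By the definition of consistency, this means there is some sentence $A$ of $\mathcal L$ such that both $\#A$ and $\#[\neg A]$ belong to $U$.

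Since $U$ is the union of the family $\mathcal W$, I would pick witnesses: there exist $V_1,V_2\in\mathcal W$ with $\#A\in V_1$ and $\#[\neg A]\in V_2$. Now I would invoke the fact that $\mathcal W$ is a chain under $\subseteq$: either $V_1\subseteq V_2$ or $V_2\subseteq V_1$. In either case, both G\"odel numbers $\#A$ and $\#[\neg A]$ lie in the larger of the two sets, call it $V$. Since $V\in\mathcal W\subseteq\mathcal P$, the set $V$ is consistent; but it now contains the G\"odel numbers of both $A$ and $\neg A$, contradicting the definition of consistency. Hence $U\in\mathcal P$.

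I do not expect any real obstacle here; the argument is a standard ``union of a chain preserves a finitary property'' trick, and consistency is manifestly a finitary property (its failure is witnessed by a single pair $\{\#A,\#[\neg A]\}$, which must already be contained in one element of the chain). The only thing to be careful about is recording that $\mathcal W$ could be empty, in which case $\cup\mathcal W=\emptyset\in\mathcal P$ trivially, so this edge case should be disposed of in one line at the start.
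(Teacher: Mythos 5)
Your argument is correct and complete: consistency is a finitary property, so any witness pair $\{\#A,\#[\neg A]\}$ to inconsistency of $\cup\mathcal W$ must already lie in a single member of the chain, contradicting that member's consistency. The paper itself omits the proof (deferring to Lemma 4.3 of the cited reference \cite{[11]}), but this is exactly the standard argument intended there, and your handling of the empty-chain case is a harmless extra precaution.
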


 Fixed points of  the set mapping $G:=U\mapsto G(U)$ from $\mathcal P$ to $\mathcal P$, i.e., those $U\in \mathcal P$ for which $U=G(U)$, have a central role in the formulation of MTT. 
In the formulation our main fixed point theorem we use transfinite sequences of $\mathcal P$ indexed by von Neumann ordinals. Such a sequence $(U_\lambda)_{\lambda\in\alpha}$ of $\mathcal P$ is said to be strictly increasing if
$U_\mu\subset U_\nu$ whenever $\mu\in\nu\in\alpha$.
% and strictly decreasing if $U_\nu\subset U_\mu$ whenever $\mu\in\nu\in\alpha$.
A set $V$ of $\mathcal P$ is called  {\em sound} iff $V\subseteq G(V)$.

The following fixed point theorem is proved in \cite{[11]}.

\begin{theorem}\label{T1} (\cite[Theorem 4.1]{[11]})
If $V\in \mathcal P$ is  sound, then there exists the smallest of those consistent fixed points of $G$  which contain $V$. This fixed point is the last member of the union of those transfinite sequences
$(U_\lambda)_{\lambda\in\alpha}$ of $\mathcal P$ which satisfy
\begin{itemize}
\item[(C)] $(U_\lambda)_{\lambda\in\alpha}$ is strictly increasing,
$U_0=V$, and if $0\in\mu\in \alpha$, then
$U_\mu = \underset{\lambda\in\mu}{\bigcup}G(U_\lambda)$.
\end{itemize}
\end{theorem}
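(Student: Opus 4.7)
The plan is to construct the desired fixed point by transfinite recursion starting from $V$, using condition (C) itself as the defining recurrence, and then to argue by a cardinality/ordinal size argument in ZF that the iteration must terminate inside $D$. Concretely, I would set $U_0 := V$ and, for $\mu > 0$, $U_\mu := \bigcup_{\lambda \in \mu} G(U_\lambda)$, with the aim of showing that this sequence is strictly increasing until it reaches an ordinal $\alpha$ at which $U_\alpha = G(U_\alpha)$, and that this $U_\alpha$ is the smallest consistent fixed point of $G$ extending $V$.

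The central step is a simultaneous transfinite induction on $\mu$ establishing three invariants: $U_\mu \in \mathcal{P}$ (consistency), $U_\mu \subseteq G(U_\mu)$ (soundness), and $U_\lambda \subseteq U_\nu$ whenever $\lambda \le \nu \le \mu$ (monotonicity). The base case holds by hypothesis on $V$. At stage $\mu > 0$, monotonicity at earlier stages together with Lemma \ref{L203} makes $\{G(U_\lambda) : \lambda \in \mu\}$ a chain in $\mathcal{P}$, so its union $U_\mu$ is consistent by Lemma \ref{L204}; soundness of $U_\mu$ then follows because each $G(U_\lambda) \subseteq G(U_\mu)$, again by Lemma \ref{L203}; and monotonicity at stage $\mu$ is immediate from soundness at earlier stages. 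Minimality among consistent fixed points containing $V$ is then a separate, short transfinite induction: if $W$ is any such fixed point, then at every step $G(U_\lambda) \subseteq G(W) = W$ by Lemma \ref{L203}, so $U_\mu \subseteq W$ for all $\mu$, and in particular at the terminal ordinal.

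The delicate point is making precise the phrase ``the last member of the union of those transfinite sequences'', which requires showing (i) that the recurrence (C) determines each $U_\mu$ uniquely, so any two sequences satisfying (C) agree on their common domain and their union is again such a sequence, and (ii) that this union cannot continue indefinitely. Part (ii) is the main obstacle: one must argue within ZF that a strictly increasing chain of subsets of the set $D$ cannot be indexed by arbitrarily long ordinals (otherwise one would have an injection from an ordinal exceeding the Hartogs number of $\mathcal{P}(D)$ into $\mathcal{P}(D)$). Hence the recursion halts at some ordinal $\alpha$ with $G(U_\alpha) = \bigcup_{\lambda \in \alpha+1} G(U_\lambda) = U_\alpha$, and this last member $U_\alpha$ of the maximal sequence is, by the minimality induction, the smallest consistent fixed point of $G$ containing $V$.
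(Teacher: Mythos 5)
Your argument is correct. The paper itself supplies no proof of Theorem \ref{T1} (it is imported from \cite{[11]}), but the transfinite iteration you describe --- simultaneously preserving consistency, soundness and monotonicity via Lemmas \ref{L201}, \ref{L203} and \ref{L204}, terminating by a Hartogs-type bound on strictly increasing chains of subsets of $D$, and obtaining minimality by a separate induction against an arbitrary consistent fixed point containing $V$ --- is exactly the construction that condition (C) encodes, so you have in effect reconstructed the intended proof.
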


As a consequence of Theorem \ref{T1} we obtain.
\begin{corollary}\label{C2} Let $W$ be the set of  G\"odel numbers of all those sentences of $L$ which are true in its interpretation,
and let $V$ be any subset of $W$.

(a) $V$ is a  sound and consistent subset of $D$.

(b) The union of the transfinite sequences which satisfy (C) is the smallest consistent fixed point of $G$.
\end{corollary}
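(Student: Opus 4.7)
The plan is to verify parts (a) and (b) in turn, using the key observation that rule (r1) does not depend on the argument $U$ when applied to sentences of the base language $L$.

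For part (a), I would first check consistency. Suppose, for contradiction, that both $\#A$ and $\#[\neg A]$ lie in $V\subseteq W$. Then both $A$ and $\neg A$ would be true in the interpretation of $L$. Since $L$ is MA, classical truth tables hold for $\neg$ (condition (iii)), so this is impossible. Hence $V$ is consistent. Soundness is then immediate from rule (r1): if $\#A\in V\subseteq W$, then $A$ is a sentence of $L$ that is true in the interpretation of $L$, and (r1) places $\#A$ in $G(V)$ irrespective of what $V$ is. Thus $V\subseteq G(V)$.

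For part (b), I would first invoke Theorem \ref{T1}, which applies by (a) and yields that the union of the transfinite sequences satisfying (C) is the smallest consistent fixed point of $G$ \emph{containing $V$}. To upgrade this to the smallest consistent fixed point of $G$ outright, I would show that every consistent fixed point $U$ of $G$ automatically contains $W$, and in particular contains $V$. Indeed, if $\#A\in W$, then $A$ is a sentence of $L$ true in the interpretation of $L$, so rule (r1) forces $\#A\in G(U)=U$; hence $W\subseteq U$. Combined with $V\subseteq W$, this gives $V\subseteq U$ for \emph{every} consistent fixed point $U$ of $G$, so the smallest consistent fixed point containing $V$ coincides with the smallest consistent fixed point of $G$.

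The argument is essentially a bookkeeping exercise, and the only conceptual point that requires attention is the observation that the clause of rule (r1) pertaining to sentences of $L$ is independent of the parameter set. This independence drives both the soundness claim in (a) and the uniform lower bound $W\subseteq U$ used in (b); without it, the smallest fixed point containing $V$ could conceivably vary with $V$, and the passage from Theorem \ref{T1} to the corollary would be genuinely nontrivial.
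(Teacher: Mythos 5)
Your proof is correct and follows essentially the same route as the paper: both parts rest on rule (r1) forcing $W\subseteq G(U)$ for every $U$, plus Theorem \ref{T1}. The only cosmetic difference is that the paper reaches soundness and the lower bound via $V\subseteq G(\emptyset)\subseteq G(V)$ using monotonicity (Lemma \ref{L203}) and gets consistency of $V$ from consistency of $G(\emptyset)$ (Lemma \ref{L201}), whereas you argue directly from the parameter-independence of (r1) and the classical truth tables of $L$ --- both are fine.
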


\begin{proof} (a) Rule (r1) and Lemma \ref{L203} imply that $V\subseteq G(\emptyset)\subseteq G(V)$, so that $V$ is sound. It is also consistent, as a subset of a consistent set $G(\emptyset)$. 
\smallskip

(b) $V$ is by (a) sound and consistent. If $U$ is a consistent fixed point of $G$, then $V\subseteq G(\emptyset)\subset G(U)=U$. Thus $V$ is contained in every consistent fixed point of $G$. By Theorem \ref{T1}, the union of  those transfinite sequences
$(U_\lambda)_{\lambda\in\alpha}$ of $\mathcal P$ which satisfy (C) is the smallest consistent fixed point of $G$ that contains $V$. This proves (b).
\end{proof}

\begin{remark}\label{R51}
The smallest members of  $(U_\lambda)_{\lambda\in\alpha}$ satisfying (C) are $n$-fold iterations $U_n=G^n(V)$, $n\in\mathbb N=\{0,1,\dots\}$. If they form a strictly increasing sequence, the next member $U_\omega$ is their union, $U_{\omega+n}= G^n(U_\omega)$, $n\in\mathbb N$, and so on.
\end{remark}

\section{A mathematical theory of truth}\label{S4}

Recall that  
$D$ denotes the set of G\"odel numbers of sentences of the language $\mathcal L$. Given  a subset $U$ of  $D$, let $G(U)$ and $F(U)$ be the subsets  of $D$ constructed in Section \ref{S3}. In the next definition, which is the same as presented in \cite{[11]} in a special case, we formulate our mathematical theory of truth (shortly MTT).
  
\begin{definition}\label{D41} {\it Assume that $U$ is a consistent subset of $D$, and that $U=G(U)$.  
Denote by $\mathcal L_U$ the language containing those sentences $A$ of $\mathcal L$ for which   \#$A$ is in $G(U)$ or in $F(U)$.
A  sentence $A$ of $\mathcal L_U$ is interpreted as true iff \#$A$ is in $G(U)$, and as false iff \#$A$ is in $F(U)$. $T$ is called a truth predicate for $\mathcal L_U$}.  \end{definition}

%The existence of consistent fixed points of $G$, i.e., those consistent subsets $U$ of $D$ satisfying $U=G(U)$, including the smallest one,  is proved in Section 3.
%, as in \cite[Section 4]{[11]}, when `true in $M$' is replaced by `true in the interpretation of $L$'.
%\smallskip
%If $U=G(U)$, then  $A$ is a sentence of $\mathcal L_U$ iff  $\left\lceil A\right\rceil$ is in the set $X_U$ defined by (\ref{E21}). 
In view of Definition \ref{D41}, `$\#A$ is in $G(U)$' can be replaced by `$A$ is true' and `$\#A$ is in $F(U)$' by 
`$A$ is false' in (r1)--(r9). This replacement, the construction of $G(U)$ and $F(U)$ and Lemma \ref{L201} imply that $\mathcal L_U$ is an MA language, having thus those syntactical and semantical properties which are assumed for the base language $L$.
%, and is formal enough for ordinary mathematical reasoning}.  

The following result justifies to call $T$ as a truth predicate of $\mathcal L_U$. 

\begin{lemma}\label{L4.1} If $U$ is a consistent subset of $D$, and  if $U=G(U)$, then $T$-biconditionality: $A\leftrightarrow T(\left\lceil A\right\rceil)$ is true, and $A\leftrightarrow \neg T(\left\lceil A\right\rceil)$ is false for every sentence $A$ of $\mathcal L_U$.
\end{lemma}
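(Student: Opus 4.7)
The plan is to reduce everything to the construction rules (r2), (r3), and (r7), using only that $U$ is a consistent fixed point, i.e., $U = G(U)$ and (by Lemma~\ref{L201}) $G(U) \cap F(U) = \emptyset$.

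First I would establish the core equivalence that $A$ and $T(\lceil A \rceil)$ carry the same truth-status in $\mathcal{L}_U$. Since $A$ is a sentence of $\mathcal{L}_U$, we have $\#A \in G(U) \cup F(U)$, so $A$ is a sentence of $\mathcal{L}$ and rule (r2) applies. Using $U = G(U)$, (r2) gives $\#T(\lceil A\rceil) \in G(U)$ iff $\#A \in U = G(U)$. Similarly, (r2) gives $\#T(\lceil A\rceil) \in F(U)$ iff $\#[\neg A] \in U = G(U)$, and the negation rule (r3) rewrites the right-hand side as $\#A \in F(U)$. So we obtain the two biconditionals
\[
\#T(\lceil A\rceil) \in G(U) \iff \#A \in G(U), \qquad \#T(\lceil A\rceil) \in F(U) \iff \#A \in F(U).
\]
In particular, $T(\lceil A\rceil)$ is itself a sentence of $\mathcal{L}_U$, so the biconditionals in the statement are well-formed sentences of $\mathcal{L}_U$.

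Next I would apply the biconditionality rule (r7) to conclude the first assertion: $\#[A \leftrightarrow T(\lceil A\rceil)]$ lies in $G(U)$ iff $\#A$ and $\#T(\lceil A\rceil)$ are both in $G(U)$ or both in $F(U)$, which is exactly what the displayed equivalences give (and consistency $G(U)\cap F(U)=\emptyset$ ensures the two cases are exhaustive and mutually exclusive for sentences of $\mathcal{L}_U$). Hence $A \leftrightarrow T(\lceil A\rceil)$ is true in the interpretation of $\mathcal{L}_U$ fixed by Definition~\ref{D41}.

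For the second assertion, rule (r3) combined with the equivalences above yields that $A$ and $\neg T(\lceil A\rceil)$ always have opposite truth-statuses: $\#\neg T(\lceil A\rceil) \in G(U)$ iff $\#A \in F(U)$, and $\#\neg T(\lceil A\rceil) \in F(U)$ iff $\#A \in G(U)$. Rule (r7) then puts $\#[A \leftrightarrow \neg T(\lceil A\rceil)]$ in $F(U)$, so this biconditional is false. There is no real obstacle here; the only subtlety worth being careful about is the bookkeeping that $T(\lceil A\rceil)$ actually belongs to $\mathcal{L}_U$ so that Definition~\ref{D41} assigns truth-values to it, which is precisely what the first step secures.
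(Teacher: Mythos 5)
Your proof is correct and follows essentially the same route as the paper's: both derive from (r2), (r3), and the fixed-point identity $U=G(U)$ that $\#T(\lceil A\rceil)$ and $\#A$ lie in $G(U)$ (resp.\ $F(U)$) together, and then invoke (r7) with Definition \ref{D41} to conclude. Your additional bookkeeping — noting that $\#A\in G(U)\cup F(U)$ makes the two cases exhaustive and that $T(\lceil A\rceil)$ is itself a sentence of $\mathcal L_U$ — is a welcome refinement but not a departure from the paper's argument.
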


\begin{proof} Assume that $U\subset D$ is consistent, and that $U=G(U)$. Let  $A$ be a sentence of $\mathcal L_U$.
Applying rules (r2) and (r3), and the assumption  $U=G(U)$, we obtain
\newline
-- \#$A$ is in $G(U)$ iff \#$A$ is in $U$  iff 
\#$T(\left\lceil A\right\rceil)$ is in $G(U)$ iff \#$\neg T(\left\lceil A\right\rceil)$ is in $F(U)$;
\newline
-- \#$A$ is in $F(U)$ iff \#[$\neg A$] is in $G(U)$ iff \#[$\neg A$] is in $U$  iff \#$T(\left\lceil A\right\rceil)$ is in $F(U)$
iff \#$\neg T(\left\lceil A\right\rceil)$ is in $G(U)$.  
\newline
The above results, rule (r7) and Definition  \ref{D41} imply that $A\leftrightarrow T(\left\lceil A\right\rceil)$ is true, and that $A\leftrightarrow \neg T(\left\lceil A\right\rceil)$ is false. This holds for every sentence $A$ of $\mathcal L_U$.
\end{proof}

Our main result on the connection between the valuations determined by the interpretation of $L$ and that of $\mathcal L_U$ defined in Definition \ref{D41} reads as follows:  

\begin{lemma}\label{L4.2} Let $U$ be a consistent fixed point of $G$. If $A$ is a sentence of  $L$, then either 
\newline
(a) $A$ is true in the interpretation of $L$, iff $A$ is  true,  iff $T(\left\lceil A\right\rceil)$ is true, or 
\newline
(b) $A$ is false in the interpretation of $L$,  iff  $A$ is false,  iff $T(\left\lceil  A\right\rceil)$ is false. 
\end{lemma}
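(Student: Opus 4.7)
The plan is to reduce the three-way equivalence in each of (a), (b) to two pieces: first connect ``true in the interpretation of $L$'' with ``true in $\mathcal L_U$'' via rule (r1) and Definition~\ref{D41}; then connect ``$A$ is true'' with ``$T(\left\lceil A\right\rceil)$ is true'' via Lemma~\ref{L4.1}. The overall dichotomy between (a) and (b) will come for free from the MA assumption on $L$, which guarantees that every sentence of $L$ is either true or false in its interpretation.

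First, I would observe that since $A$ is a sentence of $L$ and $L$ is fully interpreted, rule (r1) places $\#A$ in exactly one of $G(U)$ or $F(U)$; in particular $A$ is a sentence of $\mathcal L_U$, so Definition~\ref{D41} and Lemma~\ref{L4.1} both apply to $A$. Note that this does not depend on $U$: rule (r1) is governed only by the interpretation of $L$.

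Next, for the equivalence ``$A$ is true in the interpretation of $L$ iff $A$ is true (in $\mathcal L_U$)'', I would simply chain rule (r1) with Definition~\ref{D41}: by (r1), $A$ is true in $L$ iff $\#A\in G(U)$, and by Definition~\ref{D41}, $\#A\in G(U)$ iff $A$ is true in $\mathcal L_U$. The analogous chain using $F(U)$ gives the corresponding equivalence for falsity, yielding one biconditional in each of (a) and (b).

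For the remaining equivalence ``$A$ is true iff $T(\left\lceil A\right\rceil)$ is true'', I would invoke Lemma~\ref{L4.1}, which says $A\leftrightarrow T(\left\lceil A\right\rceil)$ is true in $\mathcal L_U$. Since $\mathcal L_U$ is MA (so the classical truth table for $\leftrightarrow$ applies, by condition (iii)), truth of this biconditional is equivalent to $A$ and $T(\left\lceil A\right\rceil)$ having the same truth value; combined with the fact that both sentences are fully interpreted in $\mathcal L_U$ (each is true or false), this yields that $A$ is true iff $T(\left\lceil A\right\rceil)$ is true, and $A$ is false iff $T(\left\lceil A\right\rceil)$ is false. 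Putting the two pieces together with the dichotomy from the MA property of $L$ completes the proof. I do not expect any real obstacle: the work has been done in Lemma~\ref{L4.1} and in the constructions of Section~\ref{S2}, and the only care needed is to notice that sentences of $L$ automatically live in $\mathcal L_U$ by virtue of rule (r1).
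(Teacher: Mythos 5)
Your proposal is correct and follows essentially the same route as the paper: rule (r1) plus Definition~\ref{D41} links truth in the interpretation of $L$ to truth in $\mathcal L_U$, and Lemma~\ref{L4.1} (with the classical table for $\leftrightarrow$) supplies the link to $T(\left\lceil A\right\rceil)$, with the (a)/(b) dichotomy coming from $L$ being fully interpreted. The only cosmetic difference is that the paper re-derives the equivalence ``$A$ true in the interpretation of $L$ iff $T(\left\lceil A\right\rceil)$ true'' directly from (r1), (r2) and $U=G(U)$ before invoking Lemma~\ref{L4.1}, whereas you let Lemma~\ref{L4.1} carry that weight; both are sound.
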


\begin{proof} Assume that $A$ is a sentence of $L$. Because $L$ is completely interpreted, then $A$ is either true or false in the interpretation of $L$. 

-- $A$ is true in the interpretation of $L$ iff \#$A$ is in $G(U)$, by rule (r1), iff \#$A$ is in $U$, because $U=G(U)$, iff  \#$T(\left\lceil A\right\rceil)$ is in $G(U)$ by rule (r2), iff $T(\left\lceil A\right\rceil)$ is true,  by Definition \ref{D41}.

-- $A$ is false in the interpretation of $L$ iff $\neg A$ is true in the interpretation of $L$   iff  \#[$\neg A$] is in $G(U)$, by rule (r1), iff \#[$\neg A$] is in $U$, because $U=G(U)$,  iff  \#$T(\left\lceil A\right\rceil)$ is in $F(U)$, by rule (r2), iff $T(\left\lceil A\right\rceil)$ is false, by Definition \ref{D41}.   
   
Consequently, a sentence $A$ of $L$ is true in the interpretation of $L$ iff $T(\left\lceil A\right\rceil)$ is true, and false in the interpretation of $L$ iff $T(\left\lceil A\right\rceil)$ is false. These results and the result of Lemma \ref{L4.1} imply the conclusions (a) and (b).  
\end{proof}

%The next example indicates that basic principles of mathematical proof hold in ($\mathcal L_U$,MTT).

%\begin{example}\label {Ex401} Let $A$ and $B$ be sentences of $\mathcal L_U$, where $U$ is a consistent fixed point of $G$.
%Show that the following instances of Modus Ponens and Proof by Contradiction are valid. 

%(a) If $A$ is true and $A\rightarrow B$ is true, then $B$ is true.

%(b) If $A$ is true and $\neg B\rightarrow \neg A$ is true, then $B$ is true.
%\end{example}

%{\bf Solution}. (a) Assume that $A$ is true and $A\rightarrow B$ is true. Then \#$A$ is in $G(U)$ and \#[$A\rightarrow B$] is in $G(U)$, by %Definition \ref{D41}. By (r6) \#[$A\rightarrow B$] is in $G(U)$ iff  \#$A$ is in $F(U)$ or \#$B$ is in $G(U)$. This result implies, because  %\#$A$ is in $G(U)$  and $G(U)\cap F(U)=\emptyset$, that \#$B$ is in $G(U)$. Consequently, $B$ is true by Definition \ref{D41}.     

%(a) Assume that $A$ is true and $\neg B\rightarrow \neg A$ is true. Then \#$A$ is in $G(U)$ and \#[$\neg B\rightarrow \neg A$] is in $G(U)$, %by Definition \ref{D41}. By (r6) \#[$\neg B\rightarrow \neg A$] is in $G(U)$ iff  \#[$\neg B$] is in $F(U)$ or \#[$\neg A$] is in $G(U)$. %This result implies by rule (r3) that \#$B$ is in $G(U)$ or \#$A$ is in $F(U)$.
%Since  \#$A$ is in $G(U)$  and $G(U)\cap F(U)=\emptyset$, then \#$B$ is in $G(U)$. Consequently, $B$ is true by Definition \ref{D41}.  

\section{On the Regress Problem}\label{S6}
\setcounter{equation}{0}

First of ten theses presented in \cite[p. 6]{[2]} is: ``The Regress Problem is a real problem for epistemology."
We are going to study the regress problem in the framework of MTT. 
We adjust first our terminology to that used in \cite{[17]} in the study of the regress problem. Given an MA language $L$, let  an MA language $\mathcal L_U$ that contains $L$ be determined by Definition \ref{D41}, $U$ being the smallest fixed point of $G$. 
By statements we mean the sentences of $\mathcal L_U$, which are valued by Definition \ref{D41}. 
A statement $A$ is said to entail $B$, if it is not possible that $A$ is true and $B$ is false simultaneously.  For instance, if  $A\rightarrow B$ is true, then  $A$ entails $B$. We say that a statement $A$ justifies a statement $B$ if $A$ confirms the truth of $B$. For instance, if 
$A \leftrightarrow \neg B$ is true, then $A$ justifies $B$ iff $A$ is false. If
 $A\rightarrow B$ is true, then $A$ justifies $B$ iff $A$ is true (Modus Ponens). 
$A$ is called contingent if the truth value of $A$ is unknown.
\smallskip

Consider an infinite regress
\begin{equation}\label{E0}
\dots F_i,\dots,F_1,F_0
\end{equation}
 of statements $F_i$, $i\ge 0$,
 where the statement $F_0$ is contingent.
 %, i.e., the truth-value of $F_0$ is unknown, and all, satisfy 
We shall impose the following conditions on  statements $F_i$, $i>0$ (cf. \cite{[17]}): 
\smallskip

\begin{enumerate}
\item[(i)] $F_i$  entails $F_{i-1}$;
\item[(ii)] $F_0\vee\cdots\vee F_{i-1}$ does not entail $F_i$;
\item[(iii)]  $F_0\vee\cdots\vee F_{i-1}$ does not justify $F_i$.
\item[]  Regress (\ref{E0}) is called justification-saturated if the following condition holds: 
\item[(iv)] $\dots$ what justifies $F_{i-1}$ is $F_i,\dots$, what justifies $F_1$ is $F_2$, what justifies $F_0$ is $F_1$. 
\end{enumerate}

\begin{lemma}\label{L10} Assume that in regress (\ref{E0}) the statement $F_0$ is contingent, and that the statements $F_i$, $i>0$, 
satisfy conditions (i)--(iii). 

(a) If  $F_1$ is false, then $F_{i}$ is false for each $i> 0$. $F_0$ is justified iff $F_0\leftrightarrow\neg F_1$ is true.

(b) If $F_{n}$ is true for some $n>0$, then  $F_i$ is true when $0\le i\le n$. 

(c) The regress (\ref{E0}) is justification saturated iff $F_{i}$ is true for all $i > 0$, in which case
 $F_0$ is justified.
\end{lemma}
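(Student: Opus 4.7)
The plan is to extract each of the three claims from the entailment condition (i) together with the two stated modes of justification (Modus Ponens for a true implication, and the rule ``$A \leftrightarrow \neg B$ true makes $A$ justify $B$ iff $A$ is false'') given in the opening of Section \ref{S6}. For part (a), I would first note that $F_i$ entailing $F_{i-1}$ makes $F_i \rightarrow F_{i-1}$ true, so its contrapositive propagates falsity: $F_{i-1}$ false forces $F_i$ false. A trivial induction from the hypothesis that $F_1$ is false then gives $F_i$ false for every $i>0$. For the characterisation of when $F_0$ is justified, I would identify the relevant justifier of $F_0$ in the regress as $F_1$; since $F_1$ is assumed false, Modus Ponens through $F_1 \rightarrow F_0$ cannot provide justification (it requires $F_1$ true), so the only remaining route is the biconditional-with-negation clause: $F_1$ justifies $F_0$ iff $F_1 \leftrightarrow \neg F_0$ is true and $F_1$ is false. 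Rewriting $F_1 \leftrightarrow \neg F_0$ as $F_0 \leftrightarrow \neg F_1$ delivers the stated equivalence.

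Part (b) is a finite downward induction on $i$ using (i) directly: $F_n$ true and $F_n \rightarrow F_{n-1}$ true force $F_{n-1}$ true; iterating this step $n$ times reaches $F_0$. For part (c), I would handle the two directions separately. If every $F_i$ with $i>0$ is true, then Modus Ponens applied to the implications $F_i \rightarrow F_{i-1}$ (which are true by (i)) shows that each $F_i$ justifies $F_{i-1}$, establishing condition (iv), and in particular $F_1$ (true) justifies $F_0$, so $F_0$ is justified. Conversely, if the regress is justification-saturated, then for each $i>0$ the statement $F_i$ justifies $F_{i-1}$; combined with (i), the Modus Ponens clause of the justification definition then forces $F_i$ to be true, yielding the truth of every $F_i$ with $i>0$.

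The main obstacle I anticipate is in the ``saturated $\Rightarrow$ all $F_i$ true'' direction of (c): one must argue that the only available mode of $F_i$ justifying $F_{i-1}$ in this setting is Modus Ponens through (i), rather than via some coincidental auxiliary biconditional $F_i \leftrightarrow \neg F_{i-1}$ with $F_i$ false. I would handle this by reading the combination of (i) and (iv) as stipulating that the justification in the regress is precisely the entailment relation named in (i), so that the only definition of ``justifies'' that is invoked is Modus Ponens on $F_i \rightarrow F_{i-1}$. With that reading, the argument above closes cleanly, and all three parts of the lemma follow from (i) and the definitions in the preamble of the section.
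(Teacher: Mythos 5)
Your proposal is correct and follows essentially the same route as the paper: falsity propagates forward along the entailments in (a) (the paper phrases this as a minimal-counterexample contradiction rather than a direct induction, which is the same argument), (b) is the same downward induction, and (c) is argued in both directions through the clause that when $F_i\rightarrow F_{i-1}$ is true, $F_i$ justifies $F_{i-1}$ iff $F_i$ is true. The paper resolves the worry you raise at the end exactly as you suggest: since (i) makes each implication $F_i\rightarrow F_{i-1}$ true, the stated Modus Ponens criterion is taken as the operative (biconditional) test for justification within the regress.
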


\begin{proof}  (a) Assume that  $F_1$ is false. If $F_i$ would be true for some $i>1$, there would be the smallest such an $i$. Then $F_{i-1}$ would be true  by (i). Replacing $i$ by $i-1$, and so on, this reasoning would imply after $i-1$ steps that $F_1$ is true;  a contradiction. Thus all  statements $F_i$, $i> 0$, are false. Because $F_1$ is false, it confirms the truth of $F_0$ iff $F_0$ and $\neg F_1$ have same truth values iff
$F_0\leftrightarrow\neg F_1$ is true.

(b)  Assume that  $F_{n}$ is true for some $n>0$.  Since  $F_i$ entails $F_{i-1}$, $i=n,n-1,\dots,1$, then $F_{i}$ is true for every $i=n-1,\dots,0$. 
 
(c) If $F_n$ is false for some $n> 0$, then  $F_{n+1}$ is false by property (i), and it does not justify $F_n$, so that condition (iv) is not valid. On the other hand, condition  (i) ensures that condition (iv) is valid if $F_{i}$ is true for all $i > 0$. In this case $F_1$ justifies $F_{0}$, i.e., $F_0$ is true. 
%In cases (a) and (b)  statements $F_i$ are false, and hence don't justify $F_{i-1}$  when  $i>n$, for some $n\ge0$. 
\end{proof} 

%The following example shows that there exist  infinite regresses of statements $F_i$, $i=0,1,\dots$, that satisfy assumptions of Lemma \ref{L10}. Exactly one %instance yields an infinite justification-saturated regress.

\smallskip

\begin{example}\label{Ex2} Let $L$ be the first-order language $L=\{\in\}$ of set theory, and 
$M$ the minimal model of ZF set theory constructed in \cite{[4]}. $M$ is countable and contains the set $\omega$ of natural numbers and their set $S(\omega)=\omega\cup\{\omega\}$ (cf. \cite{[4],[13]}).
 We assume that numerals are  defined in $L$, e.g., as in  \cite{[Fi]}. 
%The interpretation induced by $M$ makes $L$ fully interpreted (cf. Section \ref{S5}).
Interpret a sentence $A$ of $L$ as true in $L$
if  $M{|\!\!\!=} A$, and false in $L$ if $M{|\!\!\!=} \neg A$, in the sense defined in  \cite[II.2.7 and p. 237]{[Ku]}. 
By \cite[Lemma II.2.8.22]{[Ku]} this interpretation makes $L$ fully interpreted.  In particular, $L$ is an MA language. 
Choose $L$ as the base language of theory MTT. 
 Equip $S(\omega)$ with the natural ordering $<$ of natural numbers plus $n<\omega$ for every natural number $n$. 
 %Let $\beta$ be an element of the `universe' $S(\omega)$.
If $Z$ denotes a nonempty subset of $S(\omega)$, it is easy to verify that the infinite regress (\ref{E0}) of statements
\begin{equation}\label{E3}
F_i: \quad i<\beta, \ \mbox{for every } \ \beta\in Z, \quad i=0,1,\dots,
\end{equation}
%(\ref{E3}) 
satisfy conditions (i)--(iii), and that $F_0$ is contingent. 
Moreover, condition (iv) is valid by Lemma \ref{L10} if and only if 
$F_{i}$ is true for all $i > 0$. This holds   if and only if $Z=\{\omega\}$. 
\end{example} 
\smallskip

%\smallskip

\section{Remarks}\label{S8}
%\begin{enumerate}
%\begin{remark}\label{R52} 
The main purpose of this paper is to present a mathematical theory of truth (MTT) for a class languages which are formal enough for mathematical reasoning.  

To describe properties of MTT and to compare it to some other theories of truth, let $S=(L,\Sigma)$ be a mathematical theory, where $L$ is a first-order formal language, and $\Sigma$ is a set of axioms. 
Assume that $\Sigma$ is consistent, and is either an extension of Robinson arithmetic $Q$ (e.g., $Q$ itself or Peano arithmetic, $L$ being the language of arithmetic), or $Q$ can be interpreted in $\Sigma$ (e.g., $\Sigma$ axiomatizes ZF set theory, and $L$ is the language of set theory). 
By L\"owenheim-Skolem theorem that theory has a countable model $M$. Interpret a sentence $A$ of $L$ as true in $L$
if  $M{|\!\!\!=} A$, and false in $L$ if $M{|\!\!\!=} \neg A$, in the sense defined in  \cite[II.2.7]{[Ku]}. 
By \cite[Lemma II.2.8.22]{[Ku]} this interpretation makes $L$ fully interpreted, and $L$ is an MA language. 
Tarski's Undefinability Theorem  (cf. \cite{[20]}) implies that $L$ cannot contain it's truth predicate, yielding `Tarski's Commandment' (cf.  \cite{[MG]}). Let $\mathcal L$ be a formal language obtained by augmenting $L$ with a monadic predicate $T$. 
$T$ cannot be a truth predicate of $\mathcal L$, for otherwise one could construct a Liar sentence, which implies  the `Liar paradox' (cf. \cite{Ho}). Thus $\mathcal L$ does not contain its truth predicate, either. Many axiomatic theories of truth (cf., e.g, \cite{Fe}) are constructed for languages which contain a Liar sentence or are subject to the 'Revenge of Liar'. Such languages  are not MA languages.     
\vskip4pt
 
Theory MTT provides an alternative. Given an MA language $L$, let $\mathcal L_U$, where $U$ is a consistent fixed point of $G$, 
be an extension of $L$ constructed in Section \ref{S3}. That construction and the interpretation given for $\mathcal L_U$ in Definition \ref{D41} makes it an MA language. Moreover, $\mathcal L_U$ contains by Definition \ref{D41} a truth predicate $T$.
It follows from Lemma \ref{L4.1} that  there is no Liar sentence in $\mathcal L_U$.  As an MA language $\mathcal L_U$ {\em is formal enough for mathematical reasoning}. 
%, i.e., a sentence $A$ for which $A\leftrightarrow \neg T(A)$. 
In particular, the language $L$ of the above theory $S$
%=(L,\Sigma)$
 is extended  in S$_U=$($\mathcal L_U$,MTT) to an MA language $\mathcal L_U$ that contains its truth predicate and is free from paradoxes.
\vskip4pt

If $S=(L,\Sigma)$ is as above,
 $L$ contains by G\"odel's First Incompleteness Theorem a true arithmetical sentence, say $B$, that is not provable from the axioms of $\Sigma$ (cf. \cite{R}).  By Lemma \ref{L4.2} both $B$ and $T(B)$ are true in the interpretation of $\mathcal L_U$. Based on the existence of $B$  the following opinions on mathematical truth 
%and Platonism 
presented in \cite[Chapter 4]{[Pe]}: ``The notion of mathematical truth goes beyond the whole concept of formalism. There is something absolute and `God-given' about mathematical truth. 
Real mathematical truth goes beyond mere man-made constructions."  These opinions are questioned because of the assumption that $\Sigma$ is  consistent. (cf. \cite{R}).  Despite inability of human mind to see that consistency it is indispensable for reliability mathematical results. Mathematics rests on the belief that its theories are consistent.
\vskip4pt

MTT has properties that conform well with the eight norms formulated in \cite{[16]} for theories of truth. 
Truth is expressed by a predicate $T$. An MA language $\mathcal L_U$ contains a syntax of first-order logic with equality, natural numbers as constants and numerals as terms. It is closed under  logical connectives and quantifiers.
%, nonlogical symbols $\in$ and $T$, and variables ranging in $M$. 
%\item[(b)]  
A theory of truth is added to the base language $L$. If the interpretation of $L$ is determined by a consistent mathematical theory (Peano arithmetic, ZF set theory, e.t.c.), then  MTT proves the theory in question true, by Lemma \ref{L4.2}.
%\item[(c)] 
 Truth predicate $T$ is not subject to any restrictions within a fixed point language $\mathcal L_U$.
%\item[(d)]  
$T$-biconditional is derivable unrestrictedly  within a fixed point language $\mathcal L_U$, by the proof of Lemma \ref{L4.1}. 
%\item[(e)]  
Truth is compositional, by Definition \ref{D41} and rules (r3)--(r9).
%\item[(f)]  
The theory allows for standard interpretations if the interpretation of $L$ is standard. 
%\item[(g)]  
In particular, {\em the outer logic and the inner logic coincide, and they  are classical}. 

Paradoxes led Zermelo to axiomatize set theory.
To avoid paradoxes Tarski ``excluded all Liar-like sentences from being
well-formed", as noticed in \cite{[16]}. %As a contemporary mathematician I consider as unacceptable that a mathematical theory of truth %contains a Liar sentence.  
A fixed point language $\mathcal L_U$ does not contain such sentences in the theory MTT. In particular, MTT is  immune to `Tarski's Commandment' (cf.  \cite{[MG]}), to  Tarski's Undefinability Theorem  (cf. \cite{[20]}), to `Tarskian hierarchies' (cf. \cite{[H]}), and to `Liar paradox' (cf. \cite{Ho}). 
The smallest of those languages  for which MTT is formulated is $\mathcal L_U$, where $U$ is the smallest consistent fixed point of $G$. It relates to that of the grounded sentences defined in \cite{[10],[15]} when $L$ is the language of arithmetic. See also \cite{[6]}, where  considerations are restricted to signed statements.

 A base language $L$ can contain more sentences than first-order formal languages, thus extending the class of languages for which theories of truth are usually formulated.  
\smallskip

Another purpose of the presented theory of truth  is to establish a proper framework to study  the regress problem. 
Tarski's theory of truth (cf. \cite{[20]}) does not offer it because  that theory itself is not free from infinite regress.
According to \cite[p.189]{[P]}: ``the most
important problem with a Tarskian truth predicate is its demand
for a hierarchy of languages. ...
within that hierarchy of languages, we cannot
seem to have any valid method of ending the regression to
introduce the ``basic" metalanguage."
% Infinite regressions must be avoided, and Tarskian truth seems to imply an infinite regression of languages." 
%: ``Thus, since there can be no infinite regress, from the point of view of logic mathematics must rest ultimately on some sort of axiomatic foundations." 

\smallskip  Kripke's theory of truth is also a problematic framework because of three-valued inner logic.
 As stated in \cite[p.283]{[16]}: ``Classical first-order logic is certainly the default choice
for any selection among logical systems. It is presupposed by standard
mathematics, by (at least) huge parts of science, and by much of philosophical
reasoning." 
%In \cite{[15]}, where  Kripke presents his theory of truth  he %states that ``The ghost of the Tarski hierarchy is still with us". 
Moreover, $T$-biconditionality rule does not hold in Kripke's theory of truth because of paradoxical sentences.

Example \ref{Ex2} is inconsistent with the conclusion of \cite{[18]} cited in the Introduction.
In this example the property that regress (\ref{E0}) is justification-saturated both implies and is implied by truth of a 'foundational' statement $F_b: Z=\{\omega\}$. Thus  it  does not support the form of infinitism presented in \cite{[14]}: ``infinitism holds that there are no ultimate, foundational reasons". Pure infinite regress is even refused in \cite[p.13]{[F]}.
On the other hand, 
it supports ``impure" infinitism and the form of foundationalism presented in \cite{[2],[21]}.
 
Example \ref{Ex2} implies that the proofs in \cite{[17],[18]} to the assertion that ``any version of Principle of Sufficient Reason is false" are based on the questionable premise  that infinite regresses of justifications  don't exist.  
In fact, this  example gives some support to Principles of Sufficient Reason, as well as to many other arguments 
whose validity is questioned in \cite{[17],[18]}.  
For instance, in the `universe' $S(\omega)$ of Example \ref{Ex2},
\begin{itemize}
\item $\{\omega\}$ provides a {\em sufficient reason} for  $F_0$; 
\item $\{\omega\}$ affords an {\em ultimate and foundational reason} that justifies $F_0$;
\item $\{\omega\}$ is the {\em final explainer} of $F_0$;
\item $\{\omega\}$ gives the {\em first cause} that makes regress (\ref{E0}),(\ref{E3}) justification-saturated; 
%\item $\omega$ is a {\em necessary being} to ensure truth of  every $F_i$, $i=0,1,\dots$;
\item $\omega$ {\em explains the existence of the 'universe'} $\mathbb N$ of natural numbers  ($\mathbb N=\omega$ by \cite{[12]});
\item $\omega$ and $\{\omega\}$ {\em explain the existence of the `universe'}  $S(\omega)$ ($S(\omega)=\omega\cup\{\omega\}$ by \cite{[12]});
\item $\omega$ is {\em something beyond  natural numbers};
\item $\omega$ is {\em infinite and  greatest} in the `universe' $S(\omega)$;
\item $\omega$ is {\em `self-justified'} (The Axiom of Infinity). 
\end{itemize}

Belief that $\omega$ exists is a matter of  faith.
In Example  \ref{Ex2} we have assumed it because the model $M$ of ZF set theory contains the set $\omega\cup \{\omega\}$. 
 Notice that this set does not belong to the standard model of arithmetic. Thus MTT, where the base language $L$ is the language of arithmetic, is not a sufficient framework for Example \ref{Ex2}. 

%\end{remark}
 
\vskip16pt

{\bf Acknowledgments:} The author is indebted to Ph.d. Markus Pantsar for valuable discussions on the subject.

\baselineskip12pt

\end{document}